\documentclass[11pt]{amsart}

\usepackage{epigamath}


\usepackage[english]{babel}


\numberwithin{equation}{section}


\usepackage[shortlabels]{enumitem}
\setlist[enumerate,1]{label={\rm(\arabic*)}, ref={\rm\arabic*}} 

\usepackage{amsfonts, amsthm, amssymb, amsmath, stmaryrd}
\usepackage{mathrsfs,array}
\usepackage{tikz}
\usepackage{tikz-cd}
\usepackage{rotating}
\usetikzlibrary{arrows,shapes,positioning}
\usetikzlibrary{decorations.markings}
\tikzstyle arrowstyle=[scale=1]
\usepackage{pgfplots}
\pgfplotsset{compat=newest}
\usepackage{graphicx}
\usepackage{rotating}
\usepackage{cleveref}
\usepackage{mathtools}


\newtheorem{thm}{Theorem}[section]
\newtheorem{lem}[thm]{Lemma}
\newtheorem{prop}[thm]{Proposition}

\theoremstyle{definition}
\newtheorem{defn}[thm]{Definition}

\theoremstyle{remark}
\newtheorem{rem}[thm]{Remark}
\newtheorem{exa}[thm]{Example}

\newtheorem{remque}[thm]{Remark/Question}


\DeclareMathOperator{\supp}{supp}

\DeclareMathOperator{\sepgon}{sepgon}
\DeclareMathOperator{\Sep}{Sep}

\usepackage{url}


\EpigaVolumeYear{10}{2026} \EpigaArticleNr{1} \ReceivedOn{July 4, 2024}
\InFinalFormOn{August 2, 2025}
\AcceptedOn{August 22, 2025}

\title{Real plane separating $(M-2)$-curves of degree $d$ and totally real pencils of degree $d-3$}
\titlemark{Real plane separating $(M-2)$-curves of degree $d$ and totally real pencils of degree $d-3$}

\author{Matilde Manzaroli}
\address{Mathematisches Institut der Universit\"at T\"ubingen, Auf der Morgenstelle 10, 72076 T\"ubingen, Germany}
\email{matilde.manzaroli@uni-tuebingen.de}

\authormark{M.~Manzaroli}

\AbstractInEnglish{A non-singular real plane projective curve of degree $5$ with five connected components is separating if and only if its ovals are in non-convex position. In this article, this property is set into a different context and generalised to all real plane separating $(M-2)$-curves.}

\MSCclass{14P25, 51M15}

\KeyWords{Non-singular real plane projective separating (M-2)-curves, totally real pencils}


\acknowledgement{The author thanks the Athene Grant and the DFG, German Research Foundation (Deutsche Forschungsgemeinschaft), Project- ID 286237555, TRR 195, ``Symbolic Tools in Mathematics and their Application''.}

\begin{document}



\maketitle

\begin{prelims}

\DisplayAbstractInEnglish

\bigskip

\DisplayKeyWords

\medskip

\DisplayMSCclass

\end{prelims}


\newpage

\setcounter{tocdepth}{1}

\tableofcontents


\section{Introduction}
Let $C$ be any smooth complex compact algebraic curve equipped with an anti-holomorphic involution $\sigma:C \rightarrow C$, \textit{i.e.}~a smooth real algebraic compact curve. If the real points $C(\mathbb R)$ of $C$ separate its complex points $C (\mathbb C)$, \textit{i.e.}~$C (\mathbb C) \setminus C(\mathbb R)$ is disconnected, we say that $C$ is of \textit{type I} or \textit{separating}. 

By Harnack--Klein's inequality, see \cite{Harn76, Klei73}, the number $l$ of connected components of $C(\mathbb R)$ is bounded by the genus $g$ of $C$ plus~$1$. For any fixed $0 \leq i \leq g+1$, if $l$ equals $g+1-i$, we say that $C$ is an $(M-i)$-curve. The number $l$ is related to the  separateness property of the curve. For example, if $C$ is separating, then $l$ has parity $g+1$. Or, if $C$ is an $M$-curve, then $C$ is separating. In this article, we focus on non-singular real algebraic plane projective separating $(M-2)$-curves; see Theorem~\ref{thm: unico}.

First of all, let us present some general features of separating curves. If $C$ is of type I, the two  halves of $C (\mathbb C)\setminus C(\mathbb R )$ induce two opposite orientations on $C(\mathbb R )$ called \textit{complex orientations}; see \cite{Rokh74}. Looking at complex orientations of separating real curves embedded in some ambient surface has lead to remarkable progress in the study of their topology and a refinement of their classifications. One of the first results relating topology, complex orientations and properties of separating plane curves is Rokhlin's complex orientations formula, see \cite{Rokh74,Mish75}, and one of the more recent is \cite[Theorem 1.1]{Orev21}, where Orevkov shows that there are finer relations for the numbers which intervene in the complex orientations formula. An important role in \cite{Orev21} is played by separating morphisms.

\begin{defn}
\label{defn: separating}
We say that a real morphism $f$ from a smooth real algebraic compact curve $C$ to the complex projective line $\mathbb P^1_{\mathbb C} $ is \textit{separating} if $f^{-1}(\mathbb P^1 (\mathbb R ) )= C (\mathbb R )$. 
\end{defn}

According to Ahlfors \cite[Section~4.2]{Ahlf50}, there exists a separating morphism $f: C \rightarrow \mathbb P^1_{\mathbb C}$ if and only if~$C$ is of type I. We call \textit{separating gonality} of $C$, and we denote by $\sepgon(C)$, the minimal possible value for the degree of a separating morphism of $C$. Observe that the separating gonality always has the number of real connected components of $C(\mathbb R)$ as lower bound.

Actually, there is a more general definition of separating morphisms which includes real morphisms between any real algebraic varieties of the same dimension. We direct the interested reader to \cite{KumSha20} and \cite{KumLeTMan22}. In the current paper, we need Definition~\ref{defn: separating} only.

The study of smooth real curves of type I and their separating morphisms has been carried out mainly from two points of view: on the one hand, from that of abstract curves and, on the other hand, from that of curves embedded in some ambient surface; see \textit{e.g.}~\cite{Huis01,Gaba06,CopHui13,Copp13,Copp14,KumShaw20,Orev21}. 

Let $C$ be a real separating curve of genus $g$. If $C$ is an $M$-curve, then it admits a separating morphism of degree $g+1$, because of the Riemann--Roch theorem;  for details see \cite[Proof of Theorem 1.7]{KumShaw20}.
In general, for some fixed integers $i,k$ such that $1 \leq i \leq g+1$ and $k \geq g+1-i$, if $C$ is an $(M-i)$-curve, it is not evident, \textit{a priori}, if there exists and how to construct a separating morphism $f: C \rightarrow \mathbb P^1_{\mathbb C}$ of degree $k$. 

A remarkable result, due to Gabard, see \cite[Theorem 7.1]{Gaba06}, states that a genus $g$ real separating curve~$C$ with $l$ real connected components admits a separating morphism of degree at most $\frac{g+ l+1}{2}$. If $C$ is an $(M-2)$-curve, Gabard's result tells us that $\sepgon(C)$ equals $g$ or $g-1$.

From now on, unless otherwise stated, we call \textit{real plane $($separating\,$)$ curve} any real algebraic plane projective (separating) curve.

The real locus of a non-singular real plane curve is homeomorphic to a disjoint union of circles embedded in $\mathbb P^2 (\mathbb{R})$. One can embed a circle in $\mathbb P^2(\mathbb{R})$ in two different ways: as an \textit{oval}, \textit{i.e.}~realising the trivial class in $H_1( \mathbb P^2 (\mathbb{R}); \mathbb{Z}/2\mathbb{Z})$, or as a \textit{pseudo-line}. A non-singular real plane curve of even degree has only ovals (possibly none); if the curve is of odd degree, it has exactly one pseudo-line and ovals (possibly none). 

The real projective plane is separated by an oval in two disjoint non-homeomorphic connected components: a disk, called the \textit{interior} of the oval, and  a M\"obius band, called the \textit{exterior} of the oval. 

Even if the following lemma is a well-known result in the study of the topology of real plane curves, we could not find \Cref{lem: quintic} in the literature. So, we include its statement and proof here. 

The proof of \Cref{lem: quintic} is revealing for two reasons:
\begin{itemize}
\item On the one hand, it contains the germ of the main theorem of this article, \Cref{thm: unico}, which is a generalisation of \Cref{lem: quintic} to higher-degree real plane separating curves.
\item On the other hand, its skeleton would be quite hard to follow to prove most of its generalisations  to higher-degree curves; therefore, the proof of \Cref{lem: quintic} marks the distance between the real plane quintic case and \Cref{thm: unico}.
\end{itemize}

\begin{lem}
	\label{lem: quintic}
A non-singular real plane curve $C_5$ of degree $5$ with five connected components is separating if and only if its ovals are in non-convex position $($see Definition~\ref{defn: non-convex}\,$)$.
\end{lem}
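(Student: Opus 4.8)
The plan is to use Ahlfors' criterion recalled above (type I $\Leftrightarrow$ existence of a separating morphism) and to encode separateness through a \emph{totally real pencil of conics}, i.e. a pencil of conics (curves of degree $d-3=2$) every real member of which meets $C_5$ in only real points. First I would fix the topology: since $\deg C_5=5$ is odd and $C_5$ is an $(M-2)$-curve, its genus is $g=\binom{4}{2}=6$ and $C_5(\R)$ has $l=g+1-2=5$ connected components, namely one pseudo-line $J$ and four ovals $O_1,\dots,O_4$. Note that $l=5$ already has the parity of $g+1=7$, so the parity obstruction to type I is vacuous and the entire content of the statement lies in the mutual position of the ovals; a short Bézout argument with lines restricts their possible nesting, so that convex versus non-convex position (Definition \ref{defn: non-convex}) becomes a genuine combinatorial condition on their arrangement.

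For the implication ``non-convex position $\Rightarrow$ type I'', I would choose one point $p_i$ in the interior disk of each oval $O_i$ and consider the pencil $\{Q_t\}_{t\in\Pp^1}$ of conics through $p_1,\dots,p_4$. By Bézout each $Q_t$ meets $C_5$ in $10$ points; since $Q_t$ is a null-homotopic circle in $\Pp^2(\R)$ which passes through the interior of each oval but cannot be contained in any one of them, it crosses every $O_i$ transversally in at least two real points, giving at least eight forced real intersections. The heart of this direction is to show that a non-convex placement of the $p_i$ forces the two remaining intersection points of \emph{every} real member to be real as well (they are captured either by a double crossing of the pseudo-line or by a four-fold crossing of the ``inner'' oval), so that the pencil is totally real; the associated morphism $C_5\to\Pp^1_{\C}$ is then separating by construction, and Ahlfors' criterion gives type I. (Gabard's bound moreover guarantees a separating morphism of degree $\le(g+l+1)/2=6$, which one can recover by placing the base points on $C_5$ instead.)

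For the converse I would argue by contraposition. Assuming the ovals are in convex position, I would exhibit, for every choice of base points inside the ovals, a real conic of the pencil whose two residual intersection points with $C_5$ form a complex-conjugate pair, so that no pencil of conics through interior points can be totally real. To upgrade ``no totally real conic pencil'' to genuine type II, one records a complex-orientation obstruction: in convex position a suitable real line or conic separates the ovals in a way incompatible with the alternation of complex orientations forced by Rokhlin's formula on a type I quintic, yielding a contradiction. Hence a type I quintic must have its ovals in non-convex position.

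The main obstacle is the combinatorial core common to both implications: rigorously translating ``(non-)convex position'' into the (non-)existence of a totally real conic pencil, that is, controlling the two non-forced intersection points uniformly over all real members simultaneously and correctly accounting for the pseudo-line. A secondary difficulty is making the converse produce genuine type II rather than merely the failure of one particular construction; this is where I expect to need Rokhlin's complex-orientation formula for odd-degree curves (or a direct double-cover argument) as the decisive input.
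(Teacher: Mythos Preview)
Your strategy matches the paper's: ``non-convex $\Rightarrow$ separating'' via a totally real pencil of conics through interior points of the four ovals, and the converse via Rokhlin's complex orientations formula. For the forward direction your analysis of the ten intersection points (eight forced by the ovals, two more captured by the inner oval or the pseudo-line) is exactly the content of the paper's one-line appeal to ``convexity of conics versus non-convexity of the ovals.''

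For the converse, your detour through ``no totally real conic pencil exists in convex position'' is unnecessary and, as you yourself note, insufficient. The paper skips it entirely and goes straight to the orientation argument: assuming convex position, Fiedler's alternation theorem \cite{Fied83} (applied to a pencil of lines sweeping the ovals) forces exactly two positive and two negative ovals, and this count already contradicts Rokhlin's formula for a type~I quintic. This is the concrete mechanism you were reaching for with ``a suitable real line separates the ovals in a way incompatible with the alternation''; the missing keyword is Fiedler's theorem, which converts the convex arrangement into a definite sign pattern on the ovals.
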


\begin{figure}[h!]
\begin{picture}(100,105)
\put(-25,-3){\includegraphics[width=0.25\textwidth]{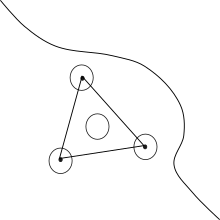}}
\end{picture}
\caption{Arrangement of a triplet $(\mathbb P^2 (\mathbb R), C(\mathbb R), S_1 \cup S_2 \cup S_3)$ as in Definition~\ref{defn: non-convex}, where the $S_i$ are the three segments.}
\label{fig: non_convex}
\end{figure} 

The interested reader can find one of the possible constructions of a real plane separating quintic with five connected components in \cite[Section~2, Figure~19, p.~36]{Viro07}. 

\begin{defn}
	\label{defn: non-convex}
	Let $C$ be a non-singular real plane curve of degree $5$ with five connected components. We say that its ovals are in non-convex position if three of the ovals of $C(\mathbb R)$ are such that, having chosen a point in the interior of each of them, once one traces three segments $S_1, S_2, S_3$ containing the points pairwise and such that every segment does not cross the pseudo-line of $C(\mathbb R)$, the fourth oval is contained inside the triangle cut out by these segments; see Figure~\ref{fig: quintic}.
\end{defn}
In order to prove \Cref{lem: quintic}, we first recall Definitions~\ref{defn: totally_real_pencil} and~\ref{defn: pos_neg_oval}. Then, we state Lemmas~\ref{lem: Fied_what_we_need} and~\ref{lem: no_nest_cx_orientations}, which are, respectively, a reformulation of \cite[Lemma 3]{Fied83} and a restriction to a special case of the complex orientations formula of \cite{Mish75}. We present Fiedler and Mishachev's results in this form for the reader's convenience and in order to avoid introducing more notation than necessary to understand the rest of the paper.

\begin{defn}
 	\label{defn: totally_real_pencil}
 	Let $C$ be a non-singular real algebraic plane projective curve of type I. We say that $C$ admits a totally real pencil of curves of degree $k$ if there exists an integer $k$ such that there are $f,g \in \mathbb R [x,y,z]_{k}$ such that $V(\lambda f + \mu g) \cap C$ consists of only real points for all $\lambda, \mu \in \mathbb R$ that are not both zero.
 \end{defn}
 
 \begin{rem}
 \label{rem: from_pencil_to_sep_morphism}
 	 Let us consider a non-singular real algebraic curve $C$ admitting a real pencil of degree $k$ as in Definition~\ref{defn: totally_real_pencil}. Set $B$ as the base locus of the pencil on the curve $C$, \textit{i.e.}~the intersection points of $V( f)$, $ V(g)$ and $C$. Note that $B$ consists of a finite number of points. Then, one can define a separating morphism from~$C$ to the projective line as follows.
 	 First, one defines a degree $k- |B|$ morphism $\tilde{h}:C \setminus B \rightarrow \mathbb P^1_{\mathbb C}$ sending a point $x$ to $[\lambda: \mu]$, where $\lambda f + \mu g$ is the defining polynomial of the curve of the pencil passing through $x$. Finally, such a morphism $\tilde{h}$ can always be uniquely extended to a degree $k -|B|$ morphism $h: C \rightarrow \mathbb P^1_{\mathbb C},$ in this case a separating morphism.
 	 \end{rem}

\begin{defn}
	\label{defn: pos_neg_oval}\leavevmode
	\begin{itemize}
	\item Let $\mathcal O$ and $J$ be, respectively, an oval and the pseudo-line of an odd-degree non-singular real plane separating curve equipped with one of its complex orientations. The orientation of the curve induces orientations on $\mathcal O$ and on $J$. Then, the oval is called positive if $[\mathcal O]= -2[J]$ in $H_1(N; \mathbb{Z})$, where $N$ is the closure of the non-orientable component of $\mathbb P^2(\mathbb R) \setminus \mathcal O$; otherwise, $\mathcal O$ is called negative. Note that an oval is either positive or negative independently of the chosen complex orientation.
	\item The total number of positive (resp.\ negative) ovals of an odd-degree non-singular real plane separating curve is denoted by $\Lambda_+$ (resp.\ $\Lambda_-$).
\end{itemize}
\end{defn}

\begin{lem}[\textit{cf.} \protect{\cite[Lemma 3]{Fied83}}]
\label{lem: Fied_what_we_need}
Let $C$ be a non-singular real plane separating curve of odd degree $d=2k+1$. 
If there exist two real distinct lines $L_0$ and $L_1$ such that
\begin{enumerate}
	\item each $L_i$ intersects $C$ in $d-1$ real points of which exactly one point $p_i$ has multiplicity two $($a tangency point$)$ and the others have multiplicity one; 
	\item the points $p_i$, for $i=1,2$, belong to two distinct ovals $ \mathcal O_1$ and $\mathcal O_2$;
	\item the intersection point of\, $L_0$ and $L_1$ is not contained in $C$;
\end{enumerate}
and if there exist two real numbers $a,b$ such that all the lines $\{L_t\}_{t \in [a,b]}$ of the pencil of lines with base point $s$ are such that $L_a = L_0$,  $L_b = L_1$ and each $L_t$ intersects $C(\mathbb R)$ transversally in exactly $d-2$ real points, for all $t \in (a,b)$; then one of the ovals $\mathcal O_1$ and $\mathcal O_2$ is positive, and the other is negative $($see Definition~\ref{defn: pos_neg_oval}\,$)$. 
\end{lem}

\begin{lem}[Special case of the complex orientations formula of \cite{Mish75}]
\label{lem: no_nest_cx_orientations}	
Let $C$ be a non-singular real plane separating curve of odd degree $d=2k+1$ and with $l$ real connected components. Assume that all ovals of\, $C$ are empty, \textit{i.e.}~the interior of each oval does not contain any other oval of\, $C$. Then, the following formula holds: 
\begin{equation}
	\Lambda_+ - \Lambda_- = l-1-k(k+1);
\end{equation}
	see the notation in Definition~\ref{defn: pos_neg_oval}.
\end{lem}

\begin{figure}[h!]
\begin{picture}(100,129)
\put(-175,0){\includegraphics[width=1\textwidth]{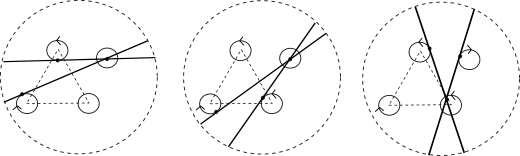}}
\end{picture}
\caption{$(\mathbb P^2 (\mathbb R) \setminus J, C_5(\mathbb R), L_0(\mathbb R) \cup L_1(\mathbb R))$. The arrows denote the fixed complex orientation of the ovals of $C(\mathbb R)$, and the dots $\bullet$ denote the points of tangency and the intersection point of the fixed lines $L_0$ and $L_1$, which are in bold.}
\label{fig: fiedler_ex}
\end{figure} 

\begin{proof}[Proof of \Cref{lem: quintic}]
 Assume that $C_5$ is separating and, for the sake of contradiction, suppose that the ovals are not in non-convex position, in the sense of Definition~\ref{defn: non-convex}. On one hand, applying \Cref{lem: Fied_what_we_need}, one can show that two ovals must be negative and two positive (see \Cref{defn: pos_neg_oval}, where the pseudo-line of $C_5$ is depicted at infinity and the dashed triangle is just a reminder of the ovals' position assumption), as follows. Pick two real lines $L_0, L_1$ intersecting in the interior of one of the ovals of $C_5$ as in the picture on the left of Figure~\ref{fig: fiedler_ex}; then, one can apply \Cref{lem: Fied_what_we_need}, finding two ovals, one positive and one negative; afterwards, one can fix a complex orientation of $C_5$. Therefore, in order to find out the complex orientation of the remaining two ovals, one iterates and uses \Cref{lem: Fied_what_we_need} two more times; see the last two pictures in Figure~\ref{fig: fiedler_ex}, reading it from left to right.
 
 On the other hand, the complex orientations formula in \Cref{lem: no_nest_cx_orientations} implies that three ovals must be negative and one positive. We get a contradiction. Therefore, the ovals must be in non-convex position.
 
 Now, assume that the ovals are in non-convex position. In order to prove that $C_5$ is separating, it is enough to prove that $C_5$ admits a totally real pencil; see \Cref{defn: totally_real_pencil} and \Cref{rem: from_pencil_to_sep_morphism}. Let us use the notation of Definition~\ref{defn: non-convex} and set $\Delta$ as the union of the segments $S_1,S_2,S_3$. Consider any real pencil $\mathcal{P}$ of conics with base locus the vertices $p_1, p_2, p_3$ of $\Delta$ plus another point $p$ contained in the oval not intersecting~$\Delta$. By construction any real conic of the pencil intersects each oval of $C_5 (\mathbb R)$ in two points. Therefore, in order to show that $\mathcal P$ is totally real and end the proof, it is enough to show that any real conic of $\mathcal P$ also intersects the pseudo-line of $C_5 (\mathbb R)$.

 Now, take a point $q$ in the connected component of $\mathbb{P}^2 (\mathbb R) \setminus \Delta$ not containing $p$. Let us consider the projection $\pi: \mathbb{P}^2 (\mathbb R) \setminus \{q\} \rightarrow \mathbb R^2$. Since any real conic $C$ of the pencil $\mathcal{P}$ is such that $\pi (C (\mathbb R))$ is convex, any side $S_i$ of $\Delta$, deprived of its extremities, must be either contained in $\pi (C (\mathbb R))$ or not. Moreover, remark that 
 \begin{itemize}
 	\item for the line $L_i$ containing $S_i$, either $S_i$ is contained in $C (\mathbb R)$, or $L_i \setminus S_i$ is contained in $C (\mathbb R)$;
 	\item if at least one among $S_1,S_2,S_3$ is not contained in $\pi (C (\mathbb R))$, then $C$ must intersect the pseudo-line. 
 \end{itemize}
 On the other hand, it is not possible that all sides of $\Delta$ are contained in $\pi (C (\mathbb R))$, by the convexity of the affine conic and the fact that all the points $p_1,p_2,p_3,p$ belong to $\pi (C (\mathbb R))$. It follows that all real conics of~$\mathcal P$ intersect all real connected components of $C_5$ for a total of ten real intersection points  for each conic, and the pencil is indeed totally real with respect to $C_5$.
\end{proof} 

A priori, Lemma~\ref{lem: quintic} is uniquely an observation concerning separating plane quintics with four ovals. It is not clear that one may expect to have some generalisation of it to other real plane separating curves. On the other hand, a remarkable fact, from the proof of \Cref{lem: quintic}, is that \textit{any non-singular real plane projective separating curve of degree $5$ with five connected components admits a totally real pencil of conics.} Surprisingly, this property can be generalised to all real plane separating $(M-2)$-curves; see \Cref{thm: unico}. Indeed, the generalisation of \Cref{lem: quintic} to separating plane $(M-2)$-curves $C_d$ of degree $d$ is that every curve $C_d$ admits totally real pencils of curves of degree $d-3$. 

Since the separating gonality of a smooth real $(M-2)$-curve of genus $g$ and of type I is either $g-1$ or $g$, the main result of this paper is as follows.

\begin{thm}
	\label{thm: unico}
	Let $C$ be a non-singular real plane separating $(M-2)$-curve of degree $d \geq 4$ and of separating gonality $g-1$ or $g$, where $g=\frac{(d-1)(d-2)}{2}$ denotes the genus of\, $C$. Then, the curve $C$ admits infinitely many totally real pencils of degree $d-3$ with $g-1$, respectively $g-2$, base points on $C$.
	\end{thm}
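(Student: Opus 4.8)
The plan is to translate the statement about totally real pencils into one about \emph{separating morphisms} and the canonical linear system, and then to produce these morphisms geometrically, generalising the construction in the proof of \Cref{lem: quintic}. The starting point is that adjoint curves cut out the canonical series: for a smooth plane curve $C$ of degree $d$ one has $K_C\cong\mathcal O_C(d-3)$ by adjunction, and since $\dim H^0(\Pp^2,\mathcal O(d-3))=\binom{d-1}{2}=g=\dim H^0(C,K_C)$ with the restriction map injective, this restriction is an isomorphism. Hence every pencil of degree $d-3$ plane curves cuts out on $C$ a pencil $\{D_t\}_{t\in\Pp^1}$ of canonical divisors, each of degree $2g-2$. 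Writing $D_t=B+E_t$ with $B$ the fixed (base) part and $E_t$ the moving part, the system $\{E_t\}$ is a base-point-free $g^{1}_{n}$ with $n=2g-2-b$ and $b=\deg B$, defining a real morphism $\phi\colon C\to\Pp^1$ of degree $n$. I would first record the resulting dictionary: the pencil is totally real in the sense of \Cref{defn: totally_real_pencil} if and only if $\phi$ is separating (\Cref{defn: separating}) \emph{and} the base divisor $B$ is supported on $C(\R)$; moreover the stated base-point counts $b\in\{g-1,g-2\}$ correspond exactly to $n\in\{g-1,g\}$, since $g-1=\tfrac{(d-3)d}{2}$.

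Next I would pin down the degree. As $C$ is an $(M-2)$-curve its real locus has $l=g-1$ components, so any separating morphism has degree at least $g-1$ (the separating gonality is bounded below by the number of real components), while Gabard's bound \cite{Gaba06} furnishes a separating morphism of degree at most $\tfrac{g+l+1}{2}=g$. Thus $n\in\{g-1,g\}$ is forced, matching the two allowed values of $b$. A Riemann--Roch computation then distinguishes the two cases through the residual class $K_C-L$ carrying the base divisor, $L:=\mathcal O_C(E_t)$: one has $h^0(K_C-L)=g+1-n$, equal to $2$ when $n=g-1$ (so $B$ itself moves in a pencil) and to $1$ when $n=g$ (so $B$ is the unique effective divisor in its class). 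In both cases $h^0(K_C-L)\geq1$, so an effective base divisor $B$ of degree $b=2g-2-n\in\{g-1,g-2\}$ exists and every separating $g^{1}_{n}$ of degree $g-1$ or $g$ extends to a genuine adjoint pencil.

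It remains to realise such a pencil with $B$ \emph{totally real} and to exhibit infinitely many; this is the heart of the argument and the main obstacle. Rather than starting from an abstract separating $g^{1}_{n}$ (whose complementary divisor need not be supported on $C(\R)$), I would build the adjoint pencil directly, imitating the quintic: prescribe real base points on the components of $C(\R)$, in a configuration dictated by the complex orientation, so that $B$ is automatically real-supported and of the expected degree $g-1$ or $g-2$. The delicate point is total reality of every member. For conics this followed from convexity together with the non-convex position of the ovals, but for degree $d-3$ adjoints convexity is unavailable, so I would replace it by a B\'ezout count combined with a homological/complex-orientation argument: each member meets $C$ in $2g-2$ points, and the prescribed real base points together with the mod-$2$ intersection constraints with each component should force at least $2g-2$ real intersections, leaving no room for conjugate pairs. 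Infinitude would then follow by varying the real base points, total reality being an open condition that holds on a nonempty real family (consistent with the Brill--Noether dimension $\rho=g-2>0$ of $g^{1}_{g}$'s for $d\geq4$): moving the base points along the ovals yields infinitely many distinct totally real pencils. The step I expect to be hardest is precisely this total-reality argument in the absence of convexity, together with verifying that no unexpected base points on $C$ inflate $b$ beyond $g-1$.
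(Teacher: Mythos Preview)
Your setup is sound: the adjunction $K_C\cong\mathcal O_C(d-3)$, the identification of degree-$(d-3)$ pencils with canonical pencils, and the dichotomy $\text{sepgon}(C)\in\{g-1,g\}$ via Gabard are exactly the right scaffolding. The gap is precisely where you locate it yourself, and it is a genuine one: the ``B\'ezout count combined with a homological/complex-orientation argument'' you propose is too weak. Mod-$2$ intersection numbers only give parities, so from $g-1$ or $g-2$ prescribed real base points you can at best force one more real intersection per component by parity; this leaves a deficit of roughly $g$ intersections unaccounted for, ample room for conjugate pairs. There is no convexity substitute at the level of generality you are aiming for, and the step ``should force at least $2g-2$ real intersections'' does not go through as stated.

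The paper closes this gap by reversing your order of operations and importing two external results. Rather than prescribing base points and then proving total reality, it \emph{starts} from a separating morphism $f$ of degree $g-1$ or $g$ (whose existence is Gabard's theorem) and uses the real fibre $f^{-1}(p)$ as the base locus. In the case $\text{sepgon}(C)=g-1$ it invokes Orevkov's orientation theorem \cite[Theorem~3.2]{Orev21}: any adjoint curve through $g-2$ of the fibre points is forced to pass through the remaining one, else one manufactures a reduced $D_1\in|K_{\Pp^2}+C|$ violating Orevkov's orientation obstruction for $f$. In the case $\text{sepgon}(C)=g$ it invokes Coppens' characterisation \cite[Theorem~4.4]{Copp14}: such a curve is of \emph{special type}, which pins down the component carrying degree two and forces the degree-$(d-3)$ pencil through the $g-2$ simple fibre points to be totally real. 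Both of these inputs are doing real work that your parity sketch cannot replace; in particular, your decision to avoid ``starting from an abstract separating $g^1_n$'' discards exactly the leverage the paper exploits.
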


In \Cref{thm: unico}, the difference in the number of possible base points of the totally real pencils admitted by the curve $C$ only depends on the fixed separating gonality of $C$; see \Cref{rem: from_pencil_to_sep_morphism}. Also, the proof of \Cref{thm: unico} is slightly different with respect to the fixed separating gonality; therefore, in Section~\ref{sec: proofs}, we split it into the proofs of, respectively, Propositions~\ref{prop: sepgon_m-2_g-1} and~\ref{prop: sepgon_m-2_g}.

 \begin{rem}
 	\label{rem: tot_real_pencil_no_base_points}
 	In \cite{Touz13}, there have been constructed totally real pencils of rational cubics for real plane separating $(M-2)$-sextics realising two particular topological arrangements in the real projective plane. On the other hand, the points of the base locus of such totally real pencils do not belong to the curves; therefore, the obtained separating morphisms of such sextics have degree 18.
 
 	\end{rem}

\subsection*{Acknowledgements}
I would like to thank Erwan Brugall\'e and the referees.

\section{Proofs and examples}
\label{sec: proofs}	
	In order to prove Theorem~\ref{thm: unico}, it is enough to prove Propositions~\ref{prop: sepgon_m-2_g-1} and~\ref{prop: sepgon_m-2_g}, which each deal with one of the two admissible separating gonalities. First, we state \cite[Theorem 3.2]{Orev21}, restricted to the case of real plane curves, which is used to prove both propositions.  

\begin{thm}[\textit{cf.} \protect{\cite[Theorem 3.2]{Orev21}}]
\label{thm: stepan}
Let $C$ be a non-singular real plane separating curve. Let $D$ be a real divisor belonging to the linear system $|C+K_{\mathbb P^2_{\mathbb C}}|$. Assume that $D$ does not have $C$ as a component. We may always write $D=2D_0 + D_1$ with $D_1$ a reduced curve and $D_0$ an effective divisor. Let us fix a complex orientation on $ C(\mathbb R)$ and an orientation on $\mathbb P^2(\mathbb R) \setminus ( C(\mathbb R) \cup D_1(\mathbb R))$ which changes each time we cross $ C(\mathbb R) \cup D_1(\mathbb R)$ at its smooth points. The latter orientation induces a boundary orientation on $C(\mathbb R) \setminus ( C(\mathbb R) \cap D_1)$. Let $f: C \rightarrow \mathbb P^1_{\mathbb C}$ be a separating morphism. Then it is impossible that, for some $p \in  \mathbb P^1(\mathbb R)$, the set $f^{-1}(p) \setminus \supp(D)$ is non-empty and the two orientations coincide at each point of the set.
\end{thm}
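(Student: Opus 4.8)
The plan is to make explicit the two structures that the statement compares — the complex orientation of $C(\mathbb R)$ and the checkerboard orientation coming from $D_1$ — and to recognise both of them as sign data of a single real meromorphic object on $C$, whereupon the asserted coincidence on a full fibre becomes a sign-constancy statement that a global index computation on a half of $C(\mathbb C)\setminus C(\mathbb R)$ forbids. First I would use adjunction: since $\deg D=d-3$ and curves of degree $d-3$ cut out on the smooth plane curve $C$ the complete canonical series, the restriction $D\cap C$ is a canonical divisor of the abstract curve $C$. Concretely, writing $F$ for a real equation of $C$ and $G$ for a real equation of $D$, the Poincar\'e residue $\omega_D = G\,dx/(\partial F/\partial y)$ is a real meromorphic $1$-form on $C$ (so $\sigma^{*}\omega_D=\overline{\omega_D}$) with $\mathrm{div}(\omega_D)=D\cap C=2(D_0\cap C)+(D_1\cap C)$. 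The decomposition $D=2D_0+D_1$ now acquires its meaning: along $C(\mathbb R)$ the form $\omega_D$ changes sign exactly at the real points of $D_1\cap C$ (simple zeros) and does not change sign at the real points of $D_0\cap C$ (double zeros). This matches precisely the checkerboard orientation, which flips across $D_1(\mathbb R)$ only; hence the boundary orientation it induces on $C(\mathbb R)\setminus(C(\mathbb R)\cap D_1)$ agrees, up to one global sign, with the orientation defined by positivity of $\omega_D$.

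Next I would translate the complex orientation. Fix the half $C^{+}$ of $C(\mathbb C)\setminus C(\mathbb R)$ bounded, with the complex orientation, by $C(\mathbb R)$. Since $f$ is separating, $f$ maps $C^{+}$ properly and holomorphically onto a half of $\mathbb P^{1}(\mathbb C)\setminus\mathbb P^{1}(\mathbb R)$, carrying $C(\mathbb R)$ to $\mathbb P^{1}(\mathbb R)$; because a holomorphic map respects boundary orientations, at every point of $C(\mathbb R)$ where $f$ is unramified the complex orientation equals the pullback by $f$ of the orientation of $\mathbb P^{1}(\mathbb R)$. Combining the two translations, the real meromorphic function $R:=\omega_D/df$ (a quotient of two real forms of canonical degree, hence of degree-$0$ divisor) is real-valued on $C(\mathbb R)$, and the coincidence of the two orientations at a point $x\in f^{-1}(p)\setminus\supp(D)$ is exactly the condition that $R(x)$ has a fixed sign. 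Thus the statement reduces to the assertion that \emph{$R$ cannot have constant sign on a non-empty fibre $f^{-1}(p)\cap\big(C(\mathbb R)\setminus\supp(D)\big)$.}

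The contradiction I would extract from a global index/argument-principle computation on the bordered surface $C^{+}$, which has Euler characteristic $1-g$. The function $R$ is meromorphic on $C^{+}$ and real on $\partial C^{+}=C(\mathbb R)$, so the total winding of $R$ along the complex-oriented boundary is, by the argument principle, the fixed integer counting the interior zeros minus poles of $R$ in $C^{+}$, all of which come from the non-real part of $\mathrm{div}(\omega_D)-\mathrm{div}(df)$. On the other hand, as $p$ runs once over $\mathbb P^{1}(\mathbb R)$ the fibres $f^{-1}(p)$ sweep $C(\mathbb R)$, and the sign of $R$ along each local branch flips exactly at its real zeros and poles, namely at the points of $D_1\cap C$ and at the real ramification and poles of $f$; a fibre on which $R$ were monochromatic would force a sign distribution incompatible with this boundary winding, yielding the desired contradiction.

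The main obstacle is the boundary bookkeeping in this last step: one must account cleanly for the even contributions of $D_0$ versus the odd contributions of $D_1$, for the real ramification points and poles of $f$ where $R$ has zeros or poles on $C(\mathbb R)$, and for the degenerate cases $p=\infty$ and $p$ a critical value, so that the winding identity and the parity of the sign changes around each component of $C(\mathbb R)$ are correctly related. Getting the signs and multiplicities in this index computation exactly right — rather than merely up to parity — is where the real work lies, and it is precisely what upgrades the classical complex orientation formula to the fibrewise obstruction asserted here.
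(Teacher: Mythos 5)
First, a point of context: the paper itself contains no proof of this statement --- it is quoted, restricted to plane curves, from \cite[Theorem 3.2]{Orev21} --- so your attempt can only be measured against Orevkov's argument. Your first two paragraphs reconstruct its correct framework: the Poincar\'e residue $\omega_D$ realising $D\cap C$ as a canonical divisor, the observation that on $C(\mathbb R)$ the real form $\omega_D$ changes sign exactly at $D_1\cap C$ (this is indeed the whole point of the decomposition $D=2D_0+D_1$) so that its sign reproduces the checkerboard boundary orientation up to one global sign, and the fact that a separating $f$ restricted to $C(\mathbb R)$ preserves the complex orientation, so that coincidence of the two orientations at $x\in f^{-1}(p)\setminus\supp(D)$ becomes a fixed-sign condition on $R(x)$ where $R=\omega_D/df$. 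All of this is sound.

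The gap is your third paragraph, i.e.\ exactly the step you defer as ``boundary bookkeeping''. The argument principle on $C^{+}$ only tells you that the winding of $R$ along the whole boundary equals the count of interior zeros minus poles --- a quantity you do not control, since it depends on the non-real part of $\mathrm{div}(\omega_D)-\mathrm{div}(df)$, which can be arbitrary as $D$ and $f$ vary. Constant sign of $R$ on a \emph{single} fibre is a pointwise condition at $\deg f$ points, and no amount of parity bookkeeping along $C(\mathbb R)$ visibly converts it into a contradiction with that global winding; the sentence ``would force a sign distribution incompatible with this boundary winding'' is precisely the assertion to be proved. What actually closes the argument is a residue (trace) identity that localises to one fibre: since $D\in|C+K_{\mathbb P^2_{\mathbb C}}|$ is effective, $\omega_D$ is \emph{holomorphic}, and since $f$ is separating, all poles of the meromorphic form $\eta_p=\omega_D/(f-p)$ lie on $f^{-1}(p)\subset C(\mathbb R)$; the residue theorem on $C$ (equivalently, the vanishing of the trace $f_{*}\omega_D$, a holomorphic $1$-form on $\mathbb P^1$) yields $\sum_{q\in f^{-1}(p)}R(q)=0$ for $p$ a regular value. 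The terms at points of $\supp(D)$ vanish because $\omega_D$ does, so if the non-empty remaining set of terms all carried the same sign the sum could not vanish; critical values of $f$ are handled by a small perturbation of $p$, legitimate because the sign of $R$ along $C(\mathbb R)$ flips only at odd-order zeros and poles. Substituting this one identity for your winding computation turns your outline into a complete proof; as written, the proposal stops exactly where the fibrewise obstruction has to be produced.
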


\begin{prop}
\label{prop: sepgon_m-2_g-1}
Let $C$ be a non-singular real plane $(M-2)$-curve of degree $d \geq 4$ and of type I. Assume that $\sepgon(C)=g-1$, where $g=\frac{(d-1)(d-2)}{2}$ is the genus of\, $C$. Then, the curve $C$ admits infinitely many totally real pencils of degree $d-3$ with $g-1$ base points on $C$.
\end{prop}

\begin{proof}
An important role in the following proof is played by B\'ezout's theorem; therefore, it is useful to note that an oval in the real projective plane always intersects any other real component in an even number of points (which may be zero).
	
	Since $\sepgon(C)=g-1$, there exists a separating morphism $f: C \rightarrow \mathbb P^1_{\mathbb C}$ of degree $g-1$. Therefore, for any fixed $p \in \mathbb P^1(\mathbb R)$, every point $p_i$ in $f^{-1}(p)$ belongs to a distinct connected component $C_i$ of $C(\mathbb R)$, where $1 \leq i \leq g-1$. 
	
In the following, firstly, we show that every real curve of any real pencil of curves of degree $d-3$ passing through $g-2$ points of $f^{-1}(p)$ contains all $g-1$ points of $f^{-1}(p)$. In particular, this would imply that any such pencil must be totally real for the curve $C$.

	For $\frac{(d-1)(d-2)}{2}-1 = g-1$ fixed real points, there always exists at least one real curve of degree $d-3$ passing through such a configuration, because the projective space of plane curves of degree $d-3$ has dimension $\frac{d(d-3)}{2}= g-1$. Note that a configuration of $g-2$ points chosen among $f^{-1}(p)$ is not necessarily generic. But there always exists at least one real pencil of curves of degree $d-3$ passing through such a configuration, and this fact is enough for the following.
	
For any fixed $p \in \mathbb P^1(\mathbb R)$, pick any configuration $\mathcal P$ of $g-2$ distinct points $p_1,\dots, p_{g-2}$ belonging to $f^{-1}(p)$. Applying the notation in Theorem~\ref{thm: stepan}, take $D_1$ as some degree $d-3$ curve containing $\mathcal P$ and an additional real point $q$, different from $p_{g-1}$. Note that $D_1$ must be reduced; otherwise, $D_1$ can be written as $2A+B$, where $A$ and $B$ are two real curves and have degree respectively $s$ and $d-3-2s$, with $1 \leq s \leq \lfloor \frac{d-3}{2} \rfloor$. But, this leads to a contradiction as follows. Because of the choice of the points $p_1,\dots, p_{g-2}$, the curve $C$ must intersect $A \cup B$ in at least $2(g-3)+1$ points, because $C$ has at least $g-2$ ovals and, therefore, $A \cup B$ intersects at least $g-3$  such ovals plus another point $p_i$, for some $1 \leq i \leq g-2$. B\'ezout's theorem implies that $(d-3-s)d$, the number of intersection points between $A \cup B$ and $C$, must be greater than or equal to $2(g-3)+1$. Hence $ sd \leq 3$, which is not possible, because $s$ is at least~$1$ and $d$ is greater than or equal to~$4$.

Let us fix an orientation on $\mathbb P^2(\mathbb R) \setminus ( C(\mathbb R) \cup D_1(\mathbb R))$ which changes each time we cross $ C(\mathbb R) \cup D_1(\mathbb R)$ at its smooth points. This orientation induces a boundary orientation $\mathfrak O$ on $C(\mathbb R) \setminus ( C(\mathbb R) \cap D_1)$.

Suppose, for the sake of contradiction, that $p_{g-1}$ does not belong to $D_1$. Then, the set $f^{-1}(p) \setminus \supp(D_1)= \{p_{g-1}\}$ is non-empty, and, up to fixing one of the two complex orientations on $ C(\mathbb R)$, such a complex orientation and the orientation $\mathfrak O$ coincide at $p_{g-1}$. But this contradicts \Cref{thm: stepan}.

 Therefore, any real curve of degree $d-3$ passing through the points of $\mathcal P$ must also contain  $p_{g-1}$. In particular, the configuration $\mathcal P$ defines a totally real pencil of curves of degree $d-3$. It may be that one has more than one such pencils; in that case, we just pick one.
 
Moreover, such a totally real pencil has exactly $g-1$ base points on $C$. Indeed $\mathcal B_p \cap C=f^{-1}(p)$, where $\mathcal B_p$ denotes the base locus of the pencil. The fact that $\mathcal B_p \cap C$ contains $f^{-1}(p)$ comes from the construction.
In order to show that the equality holds, 
it is enough to note that the degree of the separating morphism of~$C$ defined by the pencil (see Remark~\ref{rem: from_pencil_to_sep_morphism}) equals $d(d-3)$ minus the cardinality of $B_p$ and, therefore, there cannot be other base points in addition to $p_1, \dots, p_{g-1}$, or this would imply that the degree should be lower than $g-1=\sepgon(C)$, which contradicts the hypotheses.
\end{proof}

\begin{prop}
\label{prop: sepgon_m-2_g}
Let $C$ be a non-singular real plane $(M-2)$-curve of degree $d \geq 4$ and of type I. Assume that $\sepgon(C)=g$, where $g=g(C)=\frac{(d-1)(d-2)}{2}$ is the genus of\, $C$. Then, the curve $C$ admits infinitely many totally real pencils of degree $d-3$ with $g-2$ base points on $C$.
\end{prop}

\begin{proof}
Since $\sepgon(C)=g$, there exists a separating morphism $f: C \rightarrow \mathbb P^1$ of degree $g$. Therefore, for any fixed $p \in \mathbb P^1(\mathbb R)$, the points $p_1, \dots, p_{g-2}$ of $f^{-1}(p)$ belong to distinct connected components $C_i$ of $C(\mathbb R)$, where $1 \leq i \leq g-2$, and the remaining points $p_{g-1},p_g$ belong to the same connected component $C_{g-1}$.
In order to construct a totally real pencil for $C$, firstly, we are going to prove that any real plane curve of degree $d-3$ passing through $p_1, \dots , p_{g-2}$ intersects all real connected components of $C$ and, in particular, passes through at least two points of $C_{g-1}$. Note that the latter would imply that any such curve intersects $C$ in real points only. 

Let us apply the notation in Theorem~\ref{thm: stepan} and take $D_1$ as some degree $d-3$ real curve containing~$\mathcal P$ and an additional real point $q$. The choice may be not unique; in that case, we just pick one such real curve~$D_1$. Via a similar argument to the one used in the proof of Proposition~\ref{prop: sepgon_m-2_g-1}, one can show that $D_1$ must be reduced. 
Note that either both points $p_g, p_{g-1}$ belong to $ C_{g-1} \cap D_1$, or two other distinct points $r_1, r_2$ must belong to $C_{g-1} \cap D_1$ such that $p_g, p_{g-1}$ are contained in different connected components of $C_{g-1} \setminus \{ r_1,r_2 \}$. Indeed, let us fix an orientation on $\mathbb P^2(\mathbb R) \setminus ( C(\mathbb R) \cup D_1(\mathbb R))$ which changes each time we cross $ C(\mathbb R) \cup D_1(\mathbb R)$ at its smooth points. This orientation induces a boundary orientation $\mathfrak O$ on $C(\mathbb R) \setminus ( C(\mathbb R) \cap D_1)$. Suppose firstly, for the sake of contradiction, that $D_1$ intersects $p_g$ (resp.\ $p_{g-1}$) and does not intersect $p_{g-1}$ (resp.~$p_{g}$). Then, analogously to the argument used in the proof of Proposition~\ref{prop: sepgon_m-2_g-1}, one finds a contradiction with Theorem~\ref{thm: stepan}. Suppose, secondly, for the sake of contradiction, that $D_1$ does not intersect $C_{g-1}$ (resp.\ intersects $C_{g-1}$ in only one point $z$ different from $p_g, p_{g-1}$). Then the set $f^{-1}(p) \setminus \supp(D_1)= \{p_{g-1}, p_g\}$ is non-empty, and, up to fixing one of the two complex orientations on $ C(\mathbb R)$, such a complex orientation and the orientation~$\mathfrak O$ coincide at $p_{g-1}$ and $p_g$, because the points are contained in the same connected component $C_{g-1}$ (resp.\ $C_{g-1} \setminus \{ z \}$). But, once again, this contradicts \Cref{thm: stepan}. It follows that $D_1$ must intersect $C_{g-1}$ at least twice in order to separate the points $p_g$ and $p_{g-1}$; \textit{i.e.}~these points must belong to two distinct connected components of $C_{g-1} \setminus D_1$ so that $\mathfrak O$ and the fixed complex orientation of $C(\mathbb R)$ do not agree at both points. Hence $D_1$ intersects all real connected components of $C(\mathbb R)$.
Therefore, any real pencil of curves of degree $d-3$ passing through $p_1, \dots , p_{g-2}$ is totally real. Finally, let us prove that $B_p$, the base locus of the pencil on~$C$, equals $f^{-1}(p) \setminus \{ p_g , p_{g-1} \}$, \textit{i.e.}~the pencil has exactly $g-2$ base points on $C$.
By construction $p_1,\dots , p_{g-2}$ are contained in $B_p$. In order to show that the equality holds, it is enough to note that the degree of the separating morphism of $C$ defined by the pencil (see Remark~\ref{rem: from_pencil_to_sep_morphism}) equals $d(d-3)$ minus the cardinality of $B_p$ and, therefore, there cannot be other base points in addition to $p_1, \dots, p_{g-2}$, or this would imply that the degree should be lower than $g=\sepgon(C)$, which contradicts the hypotheses.
\end{proof}

Let us consider a non-singular real plane separating curve $C_5$ of degree $5$ with five connected components. As proved in \cite[Example 2.2]{Manz24}, applying \Cref{thm: stepan}, one can show that $C_5$ cannot have separating gonality equal to $5$. Therefore, $\sepgon(C_5)=6$. Moreover, applying \Cref{thm: stepan} once again, we observe the following.

\begin{exa}
\label{exa: quintic}
First, recall that $C_5$ must have three negative and one positive ovals (see \Cref{lem: no_nest_cx_orientations}). Let us prove that all separating morphisms of degree $6$ of $C_5$ must have odd degree on the three negative ovals and degree~$2$ either on the pseudo-line or on the positive oval; see Definition~\ref{defn: pos_neg_oval}.
\begin{figure}[h!]
\begin{picture}(100,109)
\put(-25,-10){\includegraphics[width=0.7\textwidth]{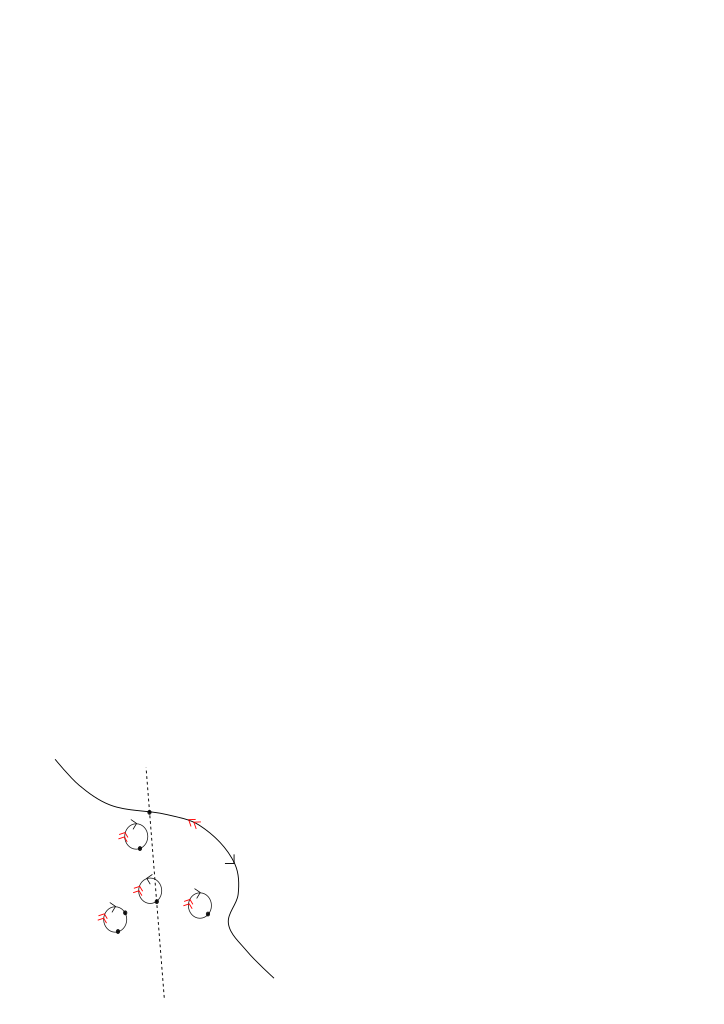}}
\end{picture}
\caption{$(\mathbb P^2 (\mathbb R), C(\mathbb R), L(\mathbb R))$ of Example~\ref{exa: quintic}. Double arrows denote $\mathfrak O$, simple arrows the fixed complex orientation of $C(\mathbb R)$ and $\bullet$ the points in $f^{-1}(p)$.}
\label{fig: quintic}
\end{figure} 
For the sake of contradiction, let us suppose that there exists a separating morphism $f: C \rightarrow \mathbb P^1_{\mathbb C}$ of degree $6$ such that $f$ has degree $2$ when restricted to a negative oval of $C(\mathbb R)$. Then, fix some $p \in \mathbb P^1 (\mathbb R)$ and apply \Cref{thm: stepan}, taking as $D_0$ the line passing through	the two points of $f^{-1}(p)$ belonging to the positive oval and the pseudo-line. Up to a choice of the orientation $\mathfrak O$ (double arrows in Figure~\ref{fig: quintic}), we get a contradiction with Theorem~\ref{thm: stepan}; therefore, such an $f$ cannot exist. On the other hand, $\sepgon(C)=6$. This means that all separating morphisms of $C$ must have odd degree on the three negative ovals.	 
\end{exa}

\begin{remque}
\label{remque: sharpness}
Let $C$ be a non-singular real plane separating curve of odd degree $d$  with $l$ real connected components. The inequalities of \cite[Theorem 1.1]{Orev21} give an upper bound for the sum of refinements of the quantities $\Lambda_+$ and $\Lambda_-$ (see \Cref{defn: pos_neg_oval}) of $C$ with respect to a number depending on $d$ and $l$ only. To be more precise, we need to introduce some notation. We say that an oval is \textit{even} (resp.\ \textit{odd}) if it is encircled by an even (resp.\ odd) number of other ovals. By convention the number of even (resp.\ odd) ovals is denoted by $p$ (resp.\ $n$). Using the notation in \cite{Orev21}, let 
\begin{center}
$\Lambda_+^p$ be the number of positive even ovals, 

$\Lambda_-^p$ be the number of negative even ovals,

$\Lambda_+^n$ be the number of positive odd ovals, 

$\Lambda_-^n$ be the number of negative odd ovals.

\end{center}

Theorem~1.1 of \cite{Orev21}, restricted to the case of real separating plane $(M-2)$-curves of genus $g$ and odd degree $2k+1$, with $k > 0$, states that the following inequalities hold:
\begin{equation}
\label{eqn: orevkov_ineq}
	\Lambda_+^p + \Lambda_-^n + 1 \geq \frac{k^2+k}{2}-1, \quad  \Lambda_+^n + \Lambda_-^p \geq \frac{k^2+k}{2}-1.
\end{equation}
For real plane quintics as in Example~\ref{exa: quintic}, one has $\Lambda_{\pm}^n=0$, $\Lambda_-^p=3$, $\Lambda_+^p=1$; therefore, the bound on the left of (\ref{eqn: orevkov_ineq}) is sharp. More in general, any odd-degree real separating plane $(M-2)$-curve of genus $g$ for which one of the two bounds in (\ref{eqn: orevkov_ineq}) is sharp must have separating gonality equal to $g$ (in fact, one can apply the same argument as in \Cref{exa: quintic}). From this observation, two questions directly follow: 
\begin{itemize}
\item For any integer $k \geq 2$, is there a non-singular real plane separating $(M-2)$-curves of degree $4k+1$ for which the bound on the left of (\ref{eqn: orevkov_ineq}) is sharp?

Observe that, because of \cite[Remark 1.8]{Orev21}, for all $k \geq 1$, such bounds can never be sharp for real separating plane $(M-2)$-curves of degree $4k+3$. Moreover, for any real plane separating curve of odd degree, the bound on the right of (\ref{eqn: orevkov_ineq}) can never be sharp.
\end{itemize}
\begin{itemize}
\item Do there exist two real plane $(M-2)$-curves of degree $d$ which have the same arrangement in the real projective plane, up to homeomorphism of $\mathbb P^2 (\mathbb R)$, but with different separating gonality?
	\end{itemize}
\end{remque}

In \cite{KumShaw20}, the separating morphisms of real separating curves are studied as follows. Let a smooth real separating algebraic compact curve $C$ consist of $l$ real connected components $C_1, \dots, C_l$. Let $f: C \rightarrow \mathbb P^1_{\mathbb C}$ be any separating morphism of $C$. Let $d_i(f) \in \mathbb N$ be the degree of the covering map $f|_{C_i}: C_i \rightarrow \mathbb P^1 (\mathbb R ) $, and set $d(f):=(d_1(f), \dots, d_l(f))$. The set $\Sep(C)$ of all such degree partitions forms a semigroup, called the \textit{separating semigroup}.

Here, we report a remark on the separating semigroup of real separating $(M-2)$-curves.

\begin{lem}
	\label{lem: sep_semigroup_m-2}
Let $C$ be a smooth real compact separating $(M-2)$-curve of genus $g \geq 2$. Then $\Sep(C) \supseteq (4,3,\dots,3)+\mathbb N^{g-1}$. Moreover, 
\begin{enumerate}
	\item $\Sep(C) \supseteq (3,\dots,3)+\mathbb N^{g-1}$ if\, $\sepgon(C)=g-1$; 
	\item $\Sep(C) \supseteq (4,2,\dots,2)+\mathbb N^{g-1}$ if\, $\sepgon(C)=g$.
\end{enumerate}
\end{lem}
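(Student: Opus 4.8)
The plan is to reduce the whole statement to two ingredients: an explicit low-degree separating morphism whose degree partition is forced by a dimension count, and a ``coordinate increment'' operation that lets me sweep out the shifted orthant once the total degree is large enough. Throughout I use that $\text{Sep}(C)$ is closed under coordinatewise addition, and that for a separating morphism $f$ one has $d_i(f)\geq 1$ for each $i$ and $\sum_{i} d_i(f)=\deg f$; since $C$ is an $(M-2)$-curve it has exactly $g-1$ real components, so $d(f)\in\mathbb{N}^{g-1}$. By \cite[Theorem 7.1]{Gaba06} we have $\text{sepgon}(C)\in\{g-1,g\}$, so exactly one of the two itemized cases occurs; moreover the unconditional bound will follow from (1) and (2), because $(4,3,\dots,3)\geq(3,\dots,3)$ and $(4,3,\dots,3)\geq(4,2,\dots,2)$ coordinatewise, hence $(4,3,\dots,3)+\mathbb{N}^{g-1}$ lies in whichever orthant applies.

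First I would pin down the seeds. In case (1) a degree $g-1$ separating morphism $f$ exists, and since its $g-1$ real fibre points over a real value must lie on the $g-1$ distinct components with at least one point on each, the partition is forced to be $d(f)=(1,\dots,1)$; adding this element to itself twice gives $(2,\dots,2)$ and $(3,\dots,3)\in\text{Sep}(C)$. In case (2) a degree $g$ separating morphism exists whose partition, by the same count, is $(2,1,\dots,1)$ up to reordering (this is exactly the distribution recorded in the proof of \Cref{prop: sepgon_m-2_g}, with the doubled component being the distinguished $\tilde C$); doubling it yields $(4,2,\dots,2)\in\text{Sep}(C)$. These are precisely the bases of the two orthants.

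The heart of the argument is the increment step: if $(d_1,\dots,d_{g-1})\in\text{Sep}(C)$ has total degree at least $2g-1$, then $(d_1,\dots,d_i+1,\dots,d_{g-1})\in\text{Sep}(C)$ for every $i$. I would phrase this through the orientable half $C^{+}$ of $C(\mathbb{C})\setminus C(\mathbb{R})$, a genus one bordered surface with $g-1$ boundary circles (since $C$ is of type I and $(M-2)$), separating morphisms being proper holomorphic branched covers $C^{+}\to\mathbb{D}$ whose degree on the $i$-th boundary circle is $d_i$. Algebraically, I would start from the base-point-free separating pencil $V\subseteq H^{0}(C,L)$ attached to $f$, choose a real point $q$ on $C_i$ away from the chosen fibre, and look for a pencil inside $|L\otimes\mathcal{O}(q)|$ that is still base-point-free and separating with one extra sheet over $C_i$. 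The degree hypothesis makes $L$ and $L\otimes\mathcal{O}(q)$ non-special with enough sections by Riemann-Roch, so such a pencil exists, and I would force its totally real (separating) character exactly as in the proofs of \Cref{prop: sepgon_m-2_g-1} and \Cref{prop: sepgon_m-2_g}, that is, by applying \Cref{thm: stepan} to the perturbed configuration. Granting this, both orthants are filled by induction: starting from $(3,\dots,3)$ (total degree $3(g-1)\geq 2g-1$) in case (1) and from $(4,2,\dots,2)$ (total degree $2g$) in case (2), I raise coordinates one at a time, the total degree only increasing, so the hypothesis of the increment step is preserved at every stage and every point of the respective shifted orthant is attained.

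The main obstacle is precisely the increment step: raising a single boundary degree by one while keeping the morphism separating, with no possibility of lowering any other degree. The subtlety is that separating morphisms obey genuine constraints coming from complex orientations (this is exactly why the stronger bound $(1,\dots,1)+\mathbb{N}^{g-1}$ is \emph{not} claimed), so the argument must use that above total degree $2g-1$ these constraints become non-binding; concretely, that the open ``totally real'' condition survives the addition of one real base point on the prescribed component. I expect that verifying the separating property of the perturbed pencil, rather than its mere existence, is where the real work lies, the dimension counts and the semigroup bookkeeping being routine.
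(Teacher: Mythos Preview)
Your overall architecture matches the paper's: obtain a seed partition from Gabard's bound, multiply it via the semigroup property to reach a non-special degree, then fill the shifted orthant. The seeds $(1,\dots,1)$ and $(2,1,\dots,1)$, and their multiples $(3,\dots,3)$ and $(4,2,\dots,2)$, are exactly what the paper uses.

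The genuine gap is the increment step, which you yourself flag as ``where the real work lies''. Your proposed mechanism---applying \Cref{thm: stepan} ``exactly as in the proofs of \Cref{prop: sepgon_m-2_g-1} and \Cref{prop: sepgon_m-2_g}''---cannot be carried out here: the lemma is stated for an \emph{abstract} separating $(M-2)$-curve, whereas \Cref{thm: stepan} as quoted, and those two propositions, depend essentially on the plane embedding (the adjoint system of degree $d-3$, B\'ezout bounds, the chessboard orientation on $\mathbb{P}^2(\mathbb{R})\setminus(C(\mathbb{R})\cup D_1(\mathbb{R}))$). For an abstract curve there is no ambient surface, no curve $D_1$, and no analogue of the perturbed configuration you describe, so the argument does not transfer.

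The paper sidesteps this entirely: the increment/orthant step is not reproved but cited as \cite[Proposition~3.2 and Remark~3.3]{KumShaw20}, which says that whenever a separating morphism has a non-special fibre divisor, each coordinate of its degree partition may be increased while staying in $\text{Sep}(C)$. Non-speciality of the divisors attached to $(3,\dots,3)$ and $(4,2,\dots,2)$ is checked via \cite[Theorem~2.5]{Huis03} (your cruder bound $\deg\geq 2g-1$ with ordinary Riemann--Roch would also do). So the orthant-filling is a one-line citation rather than a new construction; your instinct that the obstruction vanishes in high degree is correct, but the tool you reach for is specific to plane curves and is not what actually closes the argument.
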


\begin{proof}
In the proof, the following criterion is used to show that certain divisors $D$ on $C$ are non-special, \textit{i.e.}~the index of speciality $\dim (\mathcal{L}(K-D))$ of $D$ is zero, where $K$ is the canonical divisor on $C$. If $D$ has positive degree and the degree of $K-D= 2g-2 - $deg$(D)$ is negative, the index of speciality of $D$ is zero; see, for example, \cite[Remark IV.1.3.2 ]{Hart77}. 
There exists a real divisor $\tilde D$ on $C$ associated to a separating morphism $f: C \rightarrow \mathbb P^1$ with $\deg(f)=\sepgon(C)$, which is either $g$ or $g-1$; see \cite[Theorem~7.1]{Gaba06}. If $\deg(f)=g-1$, this means that $(1,\dots, 1) \in \Sep(C)$. Moreover, since $\Sep(C)$ is a semigroup, see \cite[Proposition 2.1]{KumShaw20}, there exists a separating morphism $\tilde f$ of $C$ of degree $3g-3$ with partition degree $(3, \dots, 3)$. Therefore, since $g \geq 2$, the non-speciality criterion implies that the associated divisor $\tilde D$ is non-special, and, by \cite[Proposition 3.2 and Remark 3.3]{KumShaw20}, the separating semigroup of $C$ contains $(3, \dots, 3) + \mathbb N^{g-1}$.

Otherwise, if $\deg(f)=g$, we have that $(2,1,\dots, 1) \in \Sep(C)$ and, analogously, there exists a separating morphism $\tilde f$ of $C$ of degree $2g$ with partition degree $(4,2, \dots, 2)$. Therefore, thanks to the same argument as in the previous paragraph, the associated divisor $\tilde D$ is non-special, and the separating semigroup of $C$ contains $(4, 2, \dots, 2) + \mathbb N^{g-1}$.
\end{proof}

\begin{rem}
  In \cite[Example 2.8]{KumShaw20} it is observed, via an example, that the separating semigroup of real separating curves is not symmetric in general. Here we give another example. Pick a real plane separating quintic $C$ with five connected components. A linear system of rank $2$ on a curve of genus greater than or equal to $3$ is unique; see \cite[A.18]{ACGH85}. So, one can label the pseudo-line, the positive oval and the negative ovals of $C(\mathbb R)$, respectively, as $X_1,\dots,X_5$. The element $(2,1,1,1,1)$ has been constructed in proof of \Cref{lem: quintic}. But, because of \Cref{exa: quintic}, not all permutations of $(2,1,1,1,1)$ belong to $\Sep(C)$. In fact, only $(1,2,1,1,1)$ may also exist.	
	\end{rem}

\begin{rem}
	\label{rem: other_surfaces}
	The interested reader can investigate, analogously to the case of plane curves, further constructions and applications of Theorem~\ref{thm: stepan} to real curves embedded in other ambient surfaces; see \textit{e.g.}~\cite{DegKha00,GudShu80,Mikh98,Manz21,Manz22,Orev03}.
\end{rem}


\providecommand{\bysame}{\leavevmode\hbox to3em{\hrulefill}\thinspace}

\end{document}